\date{October 20, 2024}

\documentclass[10pt]{article}

% For appearance:
%\usepackage{palatino}
\usepackage[small, pagestyles]{titlesec}
\usepackage[small,it]{caption}
\usepackage{enumitem}

\usepackage{nicefrac}

% For the \k{a} accent in a name:
\usepackage[T1]{fontenc}

% Necessary:
\usepackage{amsmath}
\usepackage{amsthm}
\usepackage{amssymb}

% Bibliography:
\usepackage[square,comma,numbers,sort&compress]{natbib}

% Tikz (also necessary):
\usepackage{tikz}

% Line numbering:
\usepackage{lineno}
%\linenumbers

%Theorems etc.
\theoremstyle{plain}
\newtheorem{theorem}{Theorem}[section]
\newtheorem{proposition}[theorem]{Proposition}
\newtheorem*{proposition*}{Proposition}

% Page dimensions:
\setlength{\textwidth}{6in}
\setlength{\textheight}{8in}
\setlength{\topmargin}{0in}
\setlength{\headsep}{0.25in}
\setlength{\headheight}{0.25in}
\setlength{\oddsidemargin}{0.25in}
\setlength{\evensidemargin}{0.25in}
\newfont{\footsc}{cmcsc10 at 8truept}
\newfont{\footbf}{cmbx10 at 8truept}
\newfont{\footrm}{cmr10 at 10truept}
\pagestyle{plain}

\renewenvironment{abstract}%
                {
                  \begin{list}{}%
                     {\setlength{\rightmargin}{1in}%
                      \setlength{\leftmargin}{1in}}%
                   \item[]\ignorespaces\begin{small}}%
                 {\end{small}\unskip\end{list}}

\newpagestyle{main}[\small]{
        \headrule
        \sethead[\usepage][][]
        {\sc Bounds on the Lettericity of Graphs}{}{\usepage}}

\setlength{\parindent}{0pt}
\setlength{\parskip}{\medskipamount}

\title{\sc Bounds on the Lettericity of Graphs}
\author{%
	Sean Mandrick and Vincent Vatter\\
	\small Department of Mathematics\\[-0.5ex]
	\small University of Florida\\[-0.5ex]
	\small Gainesville, Florida USA
}

\begin{document}
\maketitle

\begin{abstract}
Lettericity measures the minimum size of an alphabet needed to represent a graph as a letter graph, where vertices are encoded by letters, and edges are determined by an underlying decoder. We prove that all graphs on~$n$ vertices have lettericity at most approximately $n - \tfrac{1}{2} \log_2 n$ and that almost all graphs on $n$ vertices have lettericity at least $n - (2 \log_2 n + 2 \log_2 \log_2 n)$.
\end{abstract}

\section{Introduction}

Lettericity was first introduced by Petkov\v{s}ek~\cite{petkovsek:letter-graphs-a:} to investigate well-quasi-order in the induced subgraph order.
In Section~5.3 of his paper, Petkov\v{s}ek shows that there are $n$-vertex graphs with lettericity at least $0.707n$, and then Problem~3 of his conclusion asks to
``find the maximal possible lettericity of an $n$-vertex graph, and the corresponding extremal graphs.''
Despite significant recent interest in lettericity, both for its own sake~\mbox{\cite{%
	alecu:understanding-l:,
	ferguson:letter-graphs-a:,
	ferguson:on-the-letteric:,
	alecu:the-micro-world:,
	alecu:lettericity-of-:%
}},
and in its connections~\cite{%
	alecu:letter-graphs-a:iff,
	alecu:letter-graphs-a:,
	alecu:letter-graphs-a:abstract%
}
to geometric grid classes of permutations~\cite{%
	albert:geometric-grid-:,
	bevan:growth-rates-of:geom,
	albert:inflations-of-g:,
	elizalde:schur-positive-:%
}, this question has remained unaddressed until the present work. Our results demonstrate that the answer to the question is much greater than~$0.707n$. In particular, the greatest lettericity of an $n$-vertex graph lies between approximately $n-2\log_2 n$ and $n-\tfrac{1}{2}\log_2 n$.
We begin with some definitions.

For a finite alphabet $\Sigma$, we consider a set of ordered pairs $D \subseteq \Sigma^2$ which we refer to as a \textit{decoder}. Then for a word $w$ with letters $w(1)$, $w(2)$, $\dots$, $w(n)\in\Sigma$, we define the \textit{letter graph} of~$w$ with respect to~$D$ to be the graph $\Gamma_D(w)$ with the vertices $\{1,2,\dots,n \}$ and the edges $(i,j)$ for all $i<j$ with $(w(i), w(j)) \in D$.  If $|\Sigma| = k$ then we say that $\Gamma_D(w)$ is a $k$-letter graph. Finally, for any graph~$G$, the least integer $k$ such that~$G$ is (isomorphic to) a $k$-letter graph is called the \textit{lettericity} of~$G$, denoted by $\ell (G)$. 

We include some additional terminology here that will aid in the subsequent discussions. A word~$w$ is called a \textit{lettering} of a graph~$G$ if $\Gamma_D(w) = G$ for some decoder~$D$. We further say that each letter~$a\in\Sigma$ \textit{encodes} the vertices corresponding to the instances of $a$ in the word $w$. More precisely,~$a$ encodes the set $\{1\le i\le n: w(i) = a \} \subseteq V(\Gamma_D(w))$. The set of of vertices encoded by a given letter $a\in\Sigma$ must either form a clique, if $(a,a)\in D$, or an anticlique (independent set), if $(a,a)\notin D$. Thus letterings of graphs are special types of cocolorings (a concept introduced by Lesniak-Foster and Straight~\cite{lesniak-foster:the-cochromatic:}), and the lettericity of a graph is bounded below by its cochromatic number. However, as we will see, lettericity is typically much greater than cochromatic number.

A notable example of a class of graphs with lettericity 2 is the class of \emph{threshold graphs}~\cite{mahadev:threshold-graph:}. These graphs can be defined in various ways, but for our purposes, the most useful definition is as follows: a threshold graph is constructed by iteratively adding either dominating vertices (adjacent to all previously added vertices) or isolated vertices (adjacent to none of the previously added vertices). Thus, threshold graphs are precisely the letter graphs on the alphabet $\Sigma=\{a,b\}$ with the decoder $D=\{(a,b),(b,b)\}$. To see this, simply encode vertices based on their order of addition to the graph, using~$a$ for isolated vertices and~$b$ for dominating vertices.

Every $n$-vertex graph is an $n$-letter graph, as one can simply encode each vertex with its own letter and then add the appropriate pairs to the decoder. From this perspective, one `saves' letters by encoding multiple vertices with the same letter. It isn't difficult to see that the first and last vertices can always be encoded by the same letter, provided that no other vertices are encoded by that letter, and thus we can always save at least one letter. In other words, we have the elementary bound $\ell(G) \leq n-1$ for all $n$-vertex graphs~$G$. This gives rise to the following questions that we look to answer in this paper:
\begin{itemize}
	\item How many letters can we save in all graphs?
	\item How many letters can we expect to save in a random graph?
\end{itemize}

In Section 2, we use a Ramsey-type approach to show that we can save at least $k \approx \tfrac{1}{2} \log_2 n$ letters for every $n$-vertex graph, and thus the lettericity of every $n$-vertex graph is bounded above by approximately $n-\tfrac{1}{2} \log_2 n$. In Section~3, we show almost all $n$-vertex graphs have lettericity at least $n-(2\log_2 n+2\log_2\log_2 n)$.

Before getting to these results, the following proposition outlines the construction that will be used throughout the paper. We establish the upper bound in Section 2 by exploring ways to find induced subgraphs satisfying the hypotheses of Proposition~\ref{mainprop}. Then in Section 3, we will see that for almost all graphs, the only way to save letters is by finding induced subgraphs that satisfy these hypotheses.

\begin{proposition}
\label{mainprop}
Suppose~$G$ is a graph with $n$ vertices containing an induced subgraph~$H$ with $2k$ vertices that is a $k$-letter graph for a word of the form
\[
	w = \ell_1 \hspace{1mm} \ell_2 \dots \ell_k \hspace{1mm} \ell_{\pi(1)} \hspace{1mm} \ell_{\pi(2)} \dots \ell_{\pi(k)}
\]
for some permutation $\pi$ of $\{1, \dots, k \}$. Then, $w$ can be extended to a lettering of~$G$ by inserting new letters into the middle of $w$, and thus $\ell(G) \leq n-k$. 
\end{proposition}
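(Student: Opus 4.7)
The plan is to extend $w$ by inserting each vertex of $V(G)\setminus V(H)$ into the middle of $w$ with its own brand-new, unique letter, and then to extend the decoder $D$ to a decoder $D'$ on the larger alphabet so that the resulting letter graph is exactly $G$. Since $|V(G)\setminus V(H)| = n-2k$, the new alphabet has $k + (n-2k) = n-k$ letters, giving the claimed bound $\ell(G)\le n-k$ as soon as the construction works.

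First I would set up notation: label the vertices of $H$ as $v_1,\dots,v_{2k}$ in the order they appear in $w$, and enumerate the remaining vertices $u_1,\dots,u_{n-2k}$ of $G$ in any fixed order. Define
\[
    w' = \ell_1 \cdots \ell_k \, m_1 m_2 \cdots m_{n-2k} \, \ell_{\pi(1)}\cdots\ell_{\pi(k)},
\]
where $m_1,\dots,m_{n-2k}$ are new letters, all distinct from each other and from $\ell_1,\dots,\ell_k$. The decoder $D'$ will contain $D$ together with the following additional ordered pairs, chosen to realize exactly the edges of $G$ incident to the inserted vertices: for each $i\in\{1,\dots,k\}$ and each $j\in\{1,\dots,n-2k\}$, put $(\ell_i, m_j)$ into $D'$ iff $v_i u_j \in E(G)$, and put $(m_j, \ell_{\pi(i)})$ into $D'$ iff $v_{k+i} u_j \in E(G)$; for each $j<j'$ in $\{1,\dots,n-2k\}$, put $(m_j, m_{j'})$ into $D'$ iff $u_j u_{j'} \in E(G)$.

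The one point that requires genuine verification, and which explains why the proposition is phrased in terms of words of this specific shape, is that each $u_j$ sits strictly between the two occurrences of every $\ell_i$ in $w'$. This means the adjacency of $u_j$ to the first-half vertex $v_i$ is governed by the ordered pair $(\ell_i,m_j)$, while its adjacency to the second-half vertex $v_{k+\pi^{-1}(i)}$, which happens to carry the same letter $\ell_i$, is governed by the ordered pair $(m_j,\ell_i)$. Because these are distinct ordered pairs in $\Sigma^2$, they can be added to $D'$ independently, so $u_j$ can be made adjacent to any desired subset of $V(H)$. This is the main (and only real) obstacle, and it dissolves as soon as one is careful about ordered versus unordered pairs.

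It then remains to check that $\Gamma_{D'}(w') = G$. Edges within $H$ are unchanged because $D'$ agrees with $D$ on $\{\ell_1,\dots,\ell_k\}^2$ and the positions of the $v_i$ in $w'$ are order-isomorphic to those in $w$. Edges between some $u_j$ and a vertex of $H$, and edges between distinct $u_j, u_{j'}$, hold exactly as specified by the rules defining $D'$ above. Finally, no $m_j$ is ever paired with itself since each new letter appears only once, so there is no constraint forcing an unwanted self-adjacency. This completes the lettering of $G$ on $n-k$ letters.
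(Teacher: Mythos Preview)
Your proof is correct and follows essentially the same construction as the paper: insert one fresh letter per vertex of $G-H$ into the middle of $w$ and extend the decoder by independently choosing $(\ell_i,m_j)$, $(m_j,\ell_i)$, and $(m_j,m_{j'})$ to match the edges of $G$. Your explicit remark that the ordered pairs $(\ell_i,m_j)$ and $(m_j,\ell_i)$ are distinct, and hence can be set independently, nicely isolates exactly why the hypothesis on the shape of $w$ is needed.
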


\begin{proof}
Suppose~$H$ and $w$ are as in the hypothesis and that $D_1 \subseteq \{\ell_1, \ell_2, \dots, \ell_k \}^2$ is a decoder for which
\[
\Gamma_{D_1}(w) = H.
\]

Label the vertices of $G-H$ as $v_1,v_2,\dots,v_{n-2k}$ and let $\lambda_1$, $\dots$, $\lambda_{n-2k}$ be a set of distinct new letters disjoint from $\ell_1$, $\dots$, $\ell_k$. By choosing as our decoder the set $D_2 = \{ (\lambda_i, \lambda_j) : 1 \leq i < j \leq n-2k \textnormal{ and } v_i v_j \in E(G) \}$, we see immediately that
\[
\Gamma_{D_2}(\lambda_1 \lambda_2 \dots \lambda_{n-2k}) = G-H.
\]
Next define the word 
\[
w' = \ell_1 \dots \ell_k \hspace{1mm} \lambda_1 \lambda_2 \dots \lambda_{n-2k} \hspace{1mm} \ell_{\pi(1)} \dots \ell_{\pi(k)},
\]
and for each $1\le i\le k$, let $x_i$ and $y_i$ be the vertices encoded by the left and right instances of $\ell_i$ in~$w$, respectively. Now define the sets 
\begin{align*}
    D_x & = \{(\ell_i, \lambda_j):
    	\text{$1\le i\le k$, $1\le j\le n-2k$, and $x_i v_j \in E(G)$}\},\\
    D_y & = \{(\lambda_j, \ell_i):
    	\text{$1\le i\le k$, $1\le j\le n-2k$, and $v_j y_i \in E(G)$}\}.
\end{align*}
Letting $D = D_1 \cup D_2 \cup D_x \cup D_y$, it follows that $\Gamma_D(w') = G$, which proves the result.
\end{proof}

\section{Saving letters in all graphs}
\label{sec:upper-bound}

One way to satisfy the hypothesis of Proposition~\ref{mainprop} is to take~$H$ to be a clique or anticlique. Letting $R(k)$ denote the $k$th diagonal Ramsey number, we know that every graph on at least~$R(2k)$ vertices has such a subgraph, and so we obtain the following.

\begin{proposition}
\label{ramseyprop}
For each $k$ and any graph~$G$ on $n \geq R(2k)$ vertices,~$G$ has an induced subgraph with $2k$ vertices that is a $k$-letter graph on the word
\[
	w = \ell_1 \hspace{1mm} \ell_2 \dots \ell_k \hspace{1mm} \ell_{\pi(1)} \hspace{1mm} \ell_{\pi(2)} \dots \ell_{\pi(k)}
\]
for \emph{any} permutation $\pi$ of $\{1, \dots, k \}$. Thus, $\ell(G) \leq n - k$ by Proposition~\ref{mainprop}.
\end{proposition}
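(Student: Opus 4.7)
The plan is to apply Ramsey's theorem directly and then check that induced cliques and anticliques admit letterings of the required shape for every permutation $\pi$.

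Since $n \geq R(2k)$, the graph $G$ contains an induced subgraph $H$ on $2k$ vertices that is either a complete graph $K_{2k}$ or its complement $\overline{K_{2k}}$. In either case, I want to exhibit a $k$-letter encoding of $H$ on a word of the form $w = \ell_1 \ell_2 \dots \ell_k \ell_{\pi(1)} \ell_{\pi(2)} \dots \ell_{\pi(k)}$, where each letter $\ell_i$ appears exactly twice, once in the first half and once in the second.

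The key observation is that for these two graphs, the edge set is invariant under any rearrangement of vertices, so the encoding will not care what $\pi$ is. If $H \cong K_{2k}$, take the decoder $D = \{\ell_1, \dots, \ell_k\}^2$, the full set of ordered pairs (including the diagonal). Then any two positions in $w$, whether they hold the same letter or different letters, are joined by an edge, so $\Gamma_D(w) = K_{2k}$, independent of $\pi$. If $H \cong \overline{K_{2k}}$, take $D = \emptyset$, so no edges appear and $\Gamma_D(w) = \overline{K_{2k}}$, again independent of $\pi$. Either way, $H$ is a $k$-letter graph on the prescribed word, and Proposition~\ref{mainprop} then gives $\ell(G) \leq n - k$.

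There is no real obstacle here: cliques and anticliques are essentially the only ``rigid'' structures that survive the arbitrariness of $\pi$, and Ramsey's theorem is precisely the tool that forces one of them to appear once $n \geq R(2k)$. The whole proof is essentially a one-line application of Ramsey combined with the two trivial decoder choices.
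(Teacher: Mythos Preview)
Your proof is correct and matches the paper's own reasoning exactly: the paper simply observes that a clique or anticlique of size $2k$ satisfies the hypothesis of Proposition~\ref{mainprop}, and Ramsey's theorem guarantees one exists once $n\ge R(2k)$. You have merely made explicit the two decoder choices ($D=\{\ell_1,\dots,\ell_k\}^2$ and $D=\varnothing$) that the paper leaves implicit.
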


%\begin{proof}
%	Let~$H$ be the clique or anticlique of order $2k$ guaranteed by the number of vertices in~$G$. Then regarding~$H$ as the induced subgraph of~$G$ restricted to its vertices, it is clear that
%	\[
%	\Gamma_D(\ell_1 \hspace{1mm} \ell_2 \dots \ell_k \hspace{1mm} \ell_1 \hspace{1mm} \ell_2 \dots \ell_k) = H,
%	\]
%	where $D = \{ \ell_1, \ell_2, \dots , \ell_k \}^2$ if~$H$ is a clique, or $D = \varnothing$ if it is an anticlique. Then we have that $\ell(G) \leq n - k$ by applying Proposition~\ref{mainprop}.
%\end{proof}

As it is known that
\[
	\sqrt{2}^k < R(k) \leq 4^k
\]
for all $k$, Proposition~\ref{ramseyprop} implies that for all graphs on $n \approx 4^{2k}$ vertices, we can save $k \approx \tfrac{1}{4} \log_2 n$ letters. We show below that we can save twice this many letters in every $n$-vertex graph.

\begin{theorem}
\label{mainconstruction}
For every $k$ and each graph~$G$ on $n \geq 2(k-1) + 2^{2(k-1)} + 1$ vertices,~$G$ has an induced subgraph with $2k$ vertices that is a $k$-letter graph on the word
\[
w = \ell_1 \hspace{1mm} \ell_2 \dots \ell_k \hspace{1mm} \ell_k \dots \ell_2 \hspace{1mm} \ell_1.
\]
Thus, $\ell(G) \leq n - k$ by Proposition~\ref{mainprop}.
\end{theorem}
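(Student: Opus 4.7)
The plan is to translate the palindromic word into a purely structural requirement on $H$, and then build $H$ one pair at a time by pigeonhole on adjacency profiles.

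Label the vertices of a prospective $H$ as $x_1, \dots, x_k, y_k, \dots, y_1$ in the positional order dictated by $w$, so $x_i$ is the first occurrence of $\ell_i$ and $y_i$ is the second. For any $i < j$ the positional order is $x_i, x_j, y_j, y_i$, and so the decoder determines four edges through only two choices: $x_ix_j$ and $x_iy_j$ are both edges iff $(\ell_i,\ell_j)\in D$, while $x_jy_i$ and $y_jy_i$ are both edges iff $(\ell_j,\ell_i)\in D$. Consequently, $w$ yields a valid lettering of $H$ if and only if, for every $i<j$, the vertices $x_j$ and $y_j$ have identical adjacencies to both $x_i$ and $y_i$. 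In other words, $x_j$ and $y_j$ must be twins with respect to the set $T_j := \{x_{i'},y_{i'} : i' < j\}$. Any labeling satisfying this condition can be promoted to a lettering by defining $D$ so that $(\ell_i,\ell_j)\in D$ iff $x_ix_j\in E(G)$ (for $i\leq j$).

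With this reformulation the construction is greedy. Pick $x_1$ and $y_1$ to be any two distinct vertices of $G$. For $i = 2,3,\dots,k$, having already chosen the $2(i-1)$ vertices of $T_i$, assign each of the remaining $n-2(i-1)$ vertices its adjacency profile in $\{0,1\}^{2(i-1)}$ recording adjacency to the vertices of $T_i$. There are at most $2^{2(i-1)}$ possible profiles, so as long as $n-2(i-1)\geq 2^{2(i-1)}+1$, pigeonhole produces two distinct vertices sharing a profile; take them to be $x_i$ and $y_i$. The worst instance of the inequality is $i=k$, which rearranges to $n\geq 2(k-1)+2^{2(k-1)}+1$, matching the hypothesis exactly (and all earlier instances are automatic since $2^{2(i-1)}\leq 2^{2(k-1)}$). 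Invoking Proposition~\ref{mainprop} on the resulting $H$ then yields $\ell(G)\leq n-k$.

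The entire argument hinges on the initial reformulation: recognizing that the palindromic shape $\ell_1\cdots\ell_k\ell_k\cdots\ell_1$ collapses all conditions on each pair $(x_j,y_j)$ into the single twin requirement relative to the previously chosen vertices. After that insight the counting is elementary, and no combinatorial surprise remains. For perspective, Proposition~\ref{ramseyprop} effectively insists on uniform profiles (all zeros or all ones) and therefore requires $n\geq R(2k)$, which is roughly $4^{2k}$; here we exploit all $2^{2(i-1)}$ profiles available at step $i$ and pay only $2^{O(k)}$, which is precisely what delivers the factor-of-two improvement from $\tfrac{1}{4}\log_2 n$ to $\tfrac{1}{2}\log_2 n$ letters saved.
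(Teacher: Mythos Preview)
Your proof is correct and follows essentially the same approach as the paper's: both arguments build $H$ one pair $(x_i,y_i)$ at a time, using pigeonhole on the $2^{2(i-1)}$ adjacency profiles over the previously chosen $2(i-1)$ vertices to locate two twins. The paper phrases this as an induction on $k$ with an explicit decoder extension at each step, while you front-load the structural twin characterization and then run the identical greedy construction; the underlying mechanism and the resulting bound are the same.
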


\begin{proof}
We use induction on $k$. For the base case of $k=1$, we have a graph~$G$ on $n\ge 2$ vertices, and the desired induced subgraph can be obtained by taking any two vertices.

Now suppose the result holds for some $k \geq 1$ and that~$G$ is a graph on $n \geq 2k + 2^{2k} + 1$ vertices. By our hypotheses, we have that~$G$ has an induced subgraph~$H$ that is a $k$-letter graph on the word $w = \ell_1 \hspace{1mm} \ell_2 \dots \ell_k \hspace{1mm} \ell_k \dots \ell_2 \hspace{1mm} \ell_1$, say with decoder $D$. Since there are $2^{2k} + 1$ vertices in~$G$ that are not in~$H$, the pigeonhole principle tells us that two of these vertices, call them $u$ and $v$, must agree on all of the vertices in~$H$. Let $H'$ be the induced subgraph of~$G$ on the vertex set $V(H) \cup \{ u,v \}$. 

We claim that $H'$ is a letter graph on the word $w' = \ell_1 \hspace{1mm} \ell_2 \dots \ell_k \hspace{1mm} \ell_{k+1} \hspace{1mm} \ell_{k+1} \hspace{1mm} \ell_k \dots \ell_2 \hspace{1mm} \ell_1$. For each ${1\le i\le k}$, let $x_i$ denote the vertex in~$H$ that is encoded by the left occurrence of $\ell_i$ in $w$, and similarly, let $y_i$ be the vertex that is encoded by the right occurrence of $\ell_i$, (just as in the proof of Proposition~\ref{mainprop}). Now let $X = \{ (\ell_i, \ell_{k+1}): x_i u \in E(G), (\textnormal{equivalently } x_i v \in E(G)) \}$ and $Y = \{ (\ell_{k+1}, \ell_i): u y_i \in E(G), (\textnormal{equivalently } v y_i \in E(G)) \}$. Next, let~$Z$ be the set $\{(\ell_{k+1}, \ell_{k+1}) \}$ if ${uv \in E(G)}$ and~$\varnothing$ otherwise. Then it follows that~$H'$ is a letter graph on the word~$w'$ with decoder $D' = D \cup X \cup Y \cup Z$, that is, $\Gamma_{D'} (w') = H'$. This gives the result.
\end{proof}

\section{Failing to save letters in almost all graphs}
\label{sec:lower-bound}

We now focus on demonstrating that almost all graphs have large lettericity, indicating that there is little room for improvement on the upper bound given in Theorem~\ref{mainconstruction}. Recall that the random graph $G(n, \nicefrac{1}{2})$ is the graph on $n$ vertices where each edge appears independently with probability $\nicefrac{1}{2}$. Thus, every labeled $n$-vertex graph occurs with equal probability. We show that with probability tending to $1$ as $n\to\infty$, the lettericity of $G(n, \nicefrac{1}{2})$ is at least $n-(2\log_2 n + 2\log_2\log_2 n)$. First, we prove two results that greatly restrict the possible letterings of almost all graphs.

\begin{proposition}\label{threeverts}
For almost all graphs~$G$, no three vertices can be encoded by the same letter in a lettering of~$G$. 
\end{proposition}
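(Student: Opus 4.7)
My plan is to first pin down the structural constraints that a lettering forces on the graph when three vertices share a letter, and then to apply a first-moment bound on $G(n,\nicefrac{1}{2})$ to show these constraints are almost surely unsatisfiable.

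Suppose vertices $u$, $v$, $w$ all share a letter $a$ in some lettering of~$G$ and appear in this order in the word. The induced subgraph on $\{u,v,w\}$ is then a triangle or an anticlique depending on whether $(a,a)\in D$. For any other vertex $x$, encoded by a letter $b$, the three edges $xu$, $xv$, $xw$ are each determined by either $(a,b)\in D$ or $(b,a)\in D$, according to whether $x$ appears to the left or the right of each of $u,v,w$ in the word. Running through the four cases for the position of $x$ (before all three, between $u$ and $v$, between $v$ and $w$, or after all three), the achievable adjacency triples $(e_u,e_v,e_w)$ for $x$ collectively form
\[
\{(0,0,0),\ (0,0,1),\ (0,1,1),\ (1,0,0),\ (1,1,0),\ (1,1,1)\}.
\]
Thus \emph{no other vertex} may have pattern $(0,1,0)$ (adjacent to $v$ only among $\{u,v,w\}$) or pattern $(1,0,1)$ (adjacent to $u$ and $w$ but not $v$).

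I then estimate the expected number of ordered triples satisfying these constraints in $G(n,\nicefrac{1}{2})$. A fixed ordered triple $(u,v,w)$ induces a triangle or anticlique with probability $2\cdot(\nicefrac{1}{2})^3 = \nicefrac{1}{4}$. For each of the remaining $n-3$ vertices, its adjacencies to $u,v,w$ are three independent fair coin flips, so it avoids the two forbidden patterns with probability $\nicefrac{6}{8} = \nicefrac{3}{4}$, and these events are jointly independent across the other vertices and independent of the event on $\{u,v,w\}$. Hence the probability that $(u,v,w)$ satisfies every structural constraint is at most $\tfrac{1}{4}(\nicefrac{3}{4})^{n-3}$. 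Summing over the fewer than $n^3$ ordered triples, the expected number of such bad triples is at most $\tfrac{n^3}{4}(\nicefrac{3}{4})^{n-3}\to 0$, and Markov's inequality finishes the proof.

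The substantive step is the case analysis identifying $(0,1,0)$ and $(1,0,1)$ as the only forbidden patterns; once this structural lemma is in hand, the probability estimate is completely routine. The only thing to watch for is that one must count triples \emph{with} order, since which vertex plays the ``middle'' role in the word affects which adjacency patterns are forbidden.
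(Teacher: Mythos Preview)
Your proof is correct and follows essentially the same route as the paper's: identify the necessary adjacency constraints that a third vertex imposes on every other vertex, observe that exactly six of the eight adjacency patterns toward the ordered triple are achievable (equivalently, the two patterns $(0,1,0)$ and $(1,0,1)$ are forbidden), and then apply a first-moment/union bound over ordered triples in $G(n,\nicefrac{1}{2})$. The paper phrases the case analysis in terms of which pairs a vertex ``agrees on'' rather than listing the $\{0,1\}^3$ patterns explicitly, and it drops the extra $\tfrac14$ factor for the clique/anticlique condition on the triple, but these are cosmetic differences.
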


\begin{proof}
Letting $G=G(n, \nicefrac{1}{2})$, we show that the probability that three vertices can be encoded with the same letter in a lettering of~$G$ tends to $0$ as $n \to \infty$. Assume that we have a lettering $w$ of~$G$ using the alphabet $\Sigma$, and that three vertices are encoded by the letter $a \in \Sigma$. Then there exist four possibly empty words $w_1$, $w_2$, $w_3$, and $w_4$, such that
	\[
	w = w_1 \hspace{1mm} a \hspace{1mm} w_2 \hspace{1mm} a \hspace{1mm} w_3 \hspace{1mm} a \hspace{1mm} w_4.
	\]

Let $x$, $y$ and $z$ be the vertices encoded by the left, middle and right instances of $a$ in~$w$, respectively. If a vertex is encoded by the instance of some letter in $w_1$ or $w_4$, then it must agree on each of the vertices $x,y$ and $z$. If a vertex is encoded in one of $w_2$ or $w_3$, then it either must agree on $y$ and $z$, or on $x$ and $y$, respectively.

For any vertex $v \in V(G) \setminus \{x,y,z\}$, there are four possible ways it can agree or disagree with~$x$,~$y$, and~$z$: it either agrees on all three vertices, agrees on~$\{x,y\}$ but not on~$z$, agrees on~$\{y,z\}$ but not on~$x$, or agrees on~$\{x,z\}$ but not on~$y$. Since $G=G(n, \nicefrac{1}{2})$, these four possibilities are equally likely, and only the last option prevents~$w$ from having a place in which $v$ can be encoded. Thus, the probability that~$v$ can be encoded in~$w$ is $\nicefrac{3}{4}$. It follows that the probability that every vertex in $V(G)\setminus\{x,y,z\}$ can be encoded in $w$ is~$(\nicefrac{3}{4})^{n-3}$.
	
Now let $A_{(x,y,z)}$ be the event that the vertices $x$, $y$ and $z$  can be encoded, in that order, by the same letter in a lettering of~$G$. We see from above that $\Pr[A_{(x,y,z)}] \leq (\nicefrac{3}{4})^{n-3}$. (In fact, the probability is~$0$ if $x$, $y$ and $z$ do not form a clique or anticlique.) Next, define the event
\[
	A = \bigcup_{(x,y,z)} A_{(x,y,z)},
\]
where the union is taken over all sequences $(x,y,z)$ of distinct vertices of~$G$. Thus, $A$ is the event that any three vertices can be encoded by the same letter in a lettering of~$G$. We have that
\[
	\Pr[A] \leq \sum_{(x,y,z)} \Pr[A_{(x,y,z)}] \leq n(n-1)(n-2) \cdot (\nicefrac{3}{4})^{n-3}, 
\]
and therefore $\Pr[A]\to 0$ as $n \to \infty$.
\end{proof}

Because of this result, we may henceforth assume that we are not able to encode three or more vertices using the same letter. As a consequence, if we are to save letters, we must do so in pairs. With this assumption, we see next that in any lettering of almost every graph~$G$, the letters that appear in pairs are crossing or nested. That is, for almost all graphs~$G$, if the letters $a,b \in \Sigma$ both appear twice in a lettering $w$ of~$G$, then it never happens that they appear \textit{separated} as $\cdots a\cdots a\cdots b\cdots b\cdots$, but rather they must appear \emph{crossing} as $\cdots a\cdots b\cdots a\cdots b\cdots$ or \emph{nested} as $\cdots a\cdots b\cdots b\cdots a\cdots$.

\begin{proposition}\label{prop:crossornest}
For almost all graphs~$G$, if two letters appear twice in a lettering of~$G$, they must appear in a crossing or nested pattern.
\end{proposition}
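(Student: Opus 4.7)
The plan is to adapt the probabilistic argument from Proposition~\ref{threeverts}, replacing the ordered triple encoded by a single repeated letter with an ordered 4-tuple $(x_1, x_2, y_1, y_2)$ of distinct vertices appearing in this order in the lettering, where $x_1, x_2$ are encoded by one letter $a$ and $y_1, y_2$ are encoded by another letter $b$. Saying that $a$ and $b$ appear separated (with $a$ first, which we may assume by symmetry) means the lettering has the form
\[
	w = w_1 \, a \, w_2 \, a \, w_3 \, b \, w_4 \, b \, w_5
\]
for possibly empty subwords $w_1, \dots, w_5$. The goal is to bound the probability that such a configuration exists in a lettering of $G = G(n, \nicefrac{1}{2})$ and show it tends to $0$ as $n \to \infty$.

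The key step is to derive a necessary adjacency condition on each of the remaining $n-4$ vertices. If a vertex $v$ is placed in $w_i$, then any two occurrences of the same letter lying on the same side of $v$ force $v$ to have the same adjacency with both of the vertices they encode. Running through the five subwords shows that $v$ must agree on at least one of the pairs $\{x_1, x_2\}$ or $\{y_1, y_2\}$: placement in $w_2$ forces agreement on $\{y_1, y_2\}$, placement in $w_4$ forces agreement on $\{x_1, x_2\}$, and placement in $w_1$, $w_3$, or $w_5$ forces agreement on both pairs. So the weakest possibility, coming from $w_2$ or $w_4$, still yields this "at least one pair" disjunction.

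Since each pair agreement occurs with probability $\nicefrac{1}{2}$ in $G(n, \nicefrac{1}{2})$ and the two events involve disjoint edges, inclusion--exclusion gives that each remaining vertex satisfies the necessary condition with probability $\nicefrac{3}{4}$. The corresponding events across distinct vertices depend on disjoint edges and are therefore mutually independent, so the probability that all $n-4$ remaining vertices simultaneously satisfy the condition is at most $(\nicefrac{3}{4})^{n-4}$. A union bound over the $n(n-1)(n-2)(n-3)$ ordered 4-tuples $(x_1, x_2, y_1, y_2)$ then gives total probability at most $n^4 (\nicefrac{3}{4})^{n-4}$, which tends to $0$.

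The main obstacle is really just the bookkeeping in the subword analysis: one must verify for each $w_i$ exactly which of the two pairs lies entirely on one side of a vertex placed there, and then confirm that even the weakest resulting condition still implies the "agrees on at least one pair" disjunction used in the probability estimate. Once this is settled, the argument is structurally parallel to that of Proposition~\ref{threeverts}.
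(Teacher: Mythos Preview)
Your proposal is correct and follows essentially the same argument as the paper's proof: the same separated-pattern setup $w_1\,a\,w_2\,a\,w_3\,b\,w_4\,b\,w_5$, the same subword-by-subword analysis yielding the ``agrees on at least one pair'' condition, the same $\nicefrac{3}{4}$ per-vertex probability, and the same union bound over ordered $4$-tuples. The only differences are cosmetic (your notation $(x_1,x_2,y_1,y_2)$ versus the paper's $(x,y,s,t)$, and your explicit mention of inclusion--exclusion and independence where the paper simply enumerates the four equally likely cases).
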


\begin{proof}
Letting $G=G(n, \nicefrac{1}{2})$, we show that the probability that~$G$ has a lettering in which two pairs of letters appear in a separated pattern tends to~$0$ as $n \to \infty$. This will yield the result since the only other possibility is that the pairs of letters are crossing or nested. Suppose that we have a lettering~$w$ of~$G$ over the alphabet $\Sigma$ containing $a$ and $b$ given by
\[
	w = w_1 \hspace{1mm} a \hspace{1mm} w_2 \hspace{1mm} a \hspace{1mm} w_3 \hspace{1mm} b \hspace{1mm} w_4 \hspace{1mm} b \hspace{1mm} w_5,
\]
for some possibly empty words $w_1$, $w_2$, $w_3$, $w_4$, and $w_5$. Let $x$, $y$, $s$, and~$t$ be the vertices of~$G$ encoded in $w$ by these instances of $a$ and $b$, reading left to right.
Fix a vertex in $V(G) \setminus \{x,y,s,t\}$.
This vertex can be encoded in $w_1$, $w_3$, or $w_5$ only if it agrees on~$\{x,y\}$ and on~$\{s,t\}$. Further, it can be encoded in~$w_2$ or~$w_4$ only if it agrees on~$\{s,t\}$ or on~$\{x,y\}$, respectively.

For each vertex $v \in V(G) \setminus \{x, y, s, t\}$, there are four possible ways it can agree or disagree on the pairs~$\{x,y\}$ and~$\{s,t\}$: it either agrees on both pairs, agrees only on~$\{x,y\}$, agrees only on~$\{s,t\}$, or disagrees on both pairs. These possibilities are equally likely because $G = G(n, \nicefrac{1}{2})$, and only the last case prevents~$w$ from having a place in which~$v$ can be encoded. Thus, the probability that~$v$ can be encoded somewhere in~$w$ is~$\nicefrac{3}{4}$. Hence, the probability that every vertex in $V(G) \setminus \{x,y,s,t\}$ can be encoded in~$w$ is $(\nicefrac{3}{4})^{n-4}$.

Let $B_{(x,y,s,t)}$ be the event that there is a lettering of~$G$ in which the vertices $x$, $y$, $s$ and $t$ are encoded in that order, $x$ and $y$ are encoded by the same letter, and $s$ and $t$ are encoded by a second letter. Thus, this is the event that these four vertices can correspond to a separated pattern encoded in the given order. From above, we have that $\Pr[B_{(x,y,s,t)}] \le (\nicefrac{3}{4})^{n-4}$. (In fact, this probability is $0$ if $x$ and $y$ do not agree on $s$ and $t$, and vice versa.) Next, define the event
\[
B = \bigcup_{(x,y,s,t)} B_{(x,y,s,t)},
\]
where the union is over all sequences $(x,y,s,t)$ of distinct vertices of~$G$. Thus,~$B$ is the event that a lettering of~$G$ has two pairs of letters in a separated pattern. We have that 
\[
\Pr[B] \leq \sum_{(x,y,s,t)} \Pr[B_{(x,y,s,t)}] \le n(n-1)(n-2)(n-3) \cdot (\nicefrac{3}{4})^{n-4}, 
\]
and therefore $\Pr[B]\to 0$ as $n \to \infty$.
\end{proof}

By Propopsition~\ref{threeverts}, we know that for almost all graphs~$G$ on $n$ vertices, if~$G$ has a lettering with~${n-k}$ letters, then it will have $k$ letters that appear twice and $n-2k$ letters that appear once. Suppose that the letters appearing twice are $\ell_1$, $\ell_2$, $\dots$, $\ell_k$. By Proposition~\ref{prop:crossornest}, we know that for almost all graphs that have such a lettering, there is a permutation~$\pi$ of $\{1, \dots, k \}$ such that the subword of $w$ containing all of these letters is
\begin{equation}\label{word}\tag{$\dagger$}
    \ell_1 \hspace{1mm} \ell_2 \dots  \ell_k \hspace{1mm} \ell_{\pi(1)} \ell_{\pi(2)} \dots \ell_{\pi(k)}.
\end{equation}
Note that this is the same construction considered in Proposition~\ref{mainprop}.

It remains to analyze the probability that there is an induced subgraph that can be lettered by a word such as that of (\ref{word}). To this end, let~$G = G(n, \nicefrac{1}{2})$ and $k$ an integer satisfying $2k\le n$. Further, let $(v_i)=(v_1,\dots,v_{2k})$ be a sequence of distinct vertices of~$G$ and $\pi$ a permutation of $\{1,\dots,k\}$. We define $C_{(v_i),\pi}$ to be the event that there exists a decoder $D\subseteq\{\ell_1,\dots,\ell_k\}^2$ such that the mapping $v_i\mapsto i$ is an isomorphism between the induced subgraph $G[\{v_1,\dots,v_{2k}\}]$ and the letter graph $\Gamma_D(\ell_1\cdots\ell_k \hspace{1mm} \ell_{\pi(1)}\cdots\ell_{\pi(k)})$.

To evaluate $\Pr[C_{(v_i), \pi}]$, for every pair $i < j$ we handle the case of the $\ell_i$'s and $\ell_j$'s being either crossing or nested in the word $\ell_1\cdots\ell_k\hspace{1mm}\ell_{\pi(1)}\cdots\ell_{\pi(k)}$. If these letters are crossing, then, as is indicated by the three lines in Figure~\ref{fig:cross}, there are three potential edges that must agree. That is, all three of these edges will be decided by the presence or absence of $(\ell_i, \ell_j)$ in the decoder, and hence they must all be edges or non-edges. With each edge decided with probability $\nicefrac{1}{2}$, the probability that the three potential edges agree is $\nicefrac{1}{4}$.

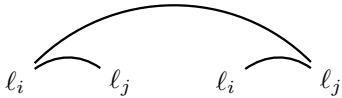
\begin{figure}[ht]
    \centering
    \begin{tikzpicture}[scale=0.7]
        \node (w) at (0,0) {$\ell_i$};
        \node (x) at (2,0) {$\ell_j$};
    
        \node (y) at (4,0) {$\ell_i$};
        \node (z) at (6,0) {$\ell_j$};

        \draw [line width = 0.3mm]   (w) to[out=45,in=135] (z);
        \draw [line width = 0.3mm]   (w) to[out=35,in=145] (x);
        \draw [line width = 0.3mm]   (y) to[out=35,in=145] (z);
        
       \end{tikzpicture}
       \caption{The potential edges that must agree in a crossing pattern.}
       \label{fig:cross}
\end{figure}

If the letters $\ell_i$ and $\ell_j$ are nested, then we see in Figure \ref{fig:nest} that there are two pairs of potential edges that must agree. That is, the solid lines must agree since they are both determined by the presence or absence of $(\ell_i, \ell_j)$ in the decoder, and the dotted lines must agree because they are both determined by the presence or absence of $(\ell_j, \ell_i)$ in the decoder. Again, with each of these edges decided with probability $\nicefrac{1}{2}$, the probability that both of these pairs of potential edges agree is $\nicefrac{1}{4}$. 

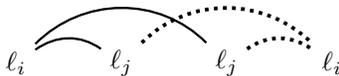
\begin{figure}[ht]
    \centering
    \begin{tikzpicture}[scale=0.7]
        \node (w) at (0,0) {$\ell_i$};
        \node (x) at (2,0) {$\ell_j$};
    
        \node (y) at (4,0) {$\ell_j$};
        \node (z) at (6,0) {$\ell_i$};
    
        \draw [line width = 0.3mm]   (w) to[out=45,in=135] (y);
        \draw [line width = 0.3mm]   (w) to[out=35,in=145] (x);
        \draw [line width = 0.5mm, dotted]   (y) to[out=35,in=145] (z);
        \draw [line width = 0.5mm, dotted]   (x) to[out=45,in=135] (z);
        \end{tikzpicture}
        \caption{The potential edges that must agree in a nested pattern.}
    \label{fig:nest}
\end{figure}

For each pair $i<j$, which of these two cases must be satisfied is determined by $\pi$, and since each case has probability~$\nicefrac{1}{4}$ of being satisfied, it follows that
\[
	\Pr[C_{(v_i), \pi}] = (\nicefrac{1}{4})^{\binom{k}{2}} = 2^{-k(k-1)}.
\]
Next, we define the event
\[
	C = \bigcup_{(v_i), \pi} C_{(v_i), \pi},
\]
where the union is over all sequences $(v_i)$ of $2k$ distinct vertices of~$G$ and all permutations $\pi$ of $\{1, \dots, k \}$. Thus, $C$ is the event that an induced subgraph of~$G$ can be encoded in the form of (\ref{word}). In light of the preceding arguments, we can regard $C$ as the only event in which $k$ letters can be saved in a lettering of almost all graphs~$G$. To obtain the main result of this section, we simply need to minimize the value of $k$ so that the probability of $C$ still goes to zero as $n\to\infty$.

\begin{theorem}
	For almost all graphs~$G$ with $n$ vertices, we have 
	\[
	\ell(G) \geq n - (2 \log_2 n + 2 \log_2 \log_2 n).
	\]
\end{theorem}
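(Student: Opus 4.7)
The plan is to combine Propositions~\ref{threeverts} and~\ref{prop:crossornest} with the union-bound setup for the event $C$ that precedes the theorem statement. Let $G=G(n,\nicefrac{1}{2})$, set $k=\lceil 2\log_2 n + 2\log_2\log_2 n\rceil$, and let $A$ and $B$ denote the events of Propositions~\ref{threeverts} and~\ref{prop:crossornest}, respectively. On the complement of $A\cup B$, every lettering of $G$ uses each letter at most twice, and any two letters that each appear twice must do so in a crossing or nested pattern. Hence, if such a $G$ also admits a lettering with $n-k$ letters, then the $k$ doubled letters form a subword exactly of the shape~(\ref{word}), placing $G$ in the event $C$. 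Therefore $\ell(G)\ge n-k$ on the complement of $A\cup B\cup C$, and the theorem reduces to showing $\Pr[A\cup B\cup C]\to 0$.

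Since $\Pr[A]\to 0$ and $\Pr[B]\to 0$ are already established, only $\Pr[C]\to 0$ remains. Applying the bound $\Pr[C_{(v_i),\pi}]\le 2^{-k(k-1)}$ derived just before the theorem together with a union bound over at most $n^{2k}$ vertex sequences and $k!$ permutations gives
\[
  \Pr[C] \le n^{2k}\cdot k!\cdot 2^{-k(k-1)}.
\]
Taking $\log_2$ and writing $L=\log_2 n$ and $M=\log_2 L$ so that $k\approx 2L+2M$, the three contributions behave like
\[
  2kL \approx 4L^2+4LM,\qquad \log_2(k!)\le k\log_2 k\approx 2LM,\qquad k(k-1)\approx 4L^2+8LM,
\]
so their signed sum is approximately $-2LM$, which tends to $-\infty$ as $n\to\infty$. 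Hence $\Pr[C]\to 0$.

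The principal obstacle is the delicacy of the choice of $k$: the leading $4L^2$ terms in $2kL$ and $k(k-1)$ must cancel, which forces the coefficient of $\log_2 n$ in $k$ to be at least $2$; anything smaller would make the union bound diverge. The $2\log_2\log_2 n$ correction is then exactly the slack needed to dominate both the $\log_2(k!)$ term coming from the permutation count and the $M^2$ and lower order remnants. Everything else is bookkeeping, namely verifying that the ceiling in the definition of $k$ does not affect the asymptotics and that the reduction to event $C$ genuinely captures every way of saving $k$ letters once $A$ and $B$ have been excluded.
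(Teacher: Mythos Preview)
Your proposal is correct and follows essentially the same route as the paper's proof: use Propositions~\ref{threeverts} and~\ref{prop:crossornest} to reduce to the event $C$, bound $\Pr[C]$ by a union bound over the at most $n^{2k}\cdot k!$ choices of vertex sequence and permutation, and verify that the choice $k\approx 2\log_2 n+2\log_2\log_2 n$ drives this bound to zero. The only cosmetic wrinkle is that with $k=\lceil 2\log_2 n+2\log_2\log_2 n\rceil$ your stated conclusion $\ell(G)\ge n-k$ is nominally a hair weaker than the theorem; however, your argument actually yields the strict inequality $\ell(G)>n-k$ (since admitting a lettering with $n-k$ letters already forces $C$), and $n-k+1>n-(2\log_2 n+2\log_2\log_2 n)$, so the bound follows.
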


\begin{proof}
It is clear from above that
\begin{equation*}
	\Pr[C]
	\leq
	\sum_{(v_i), \pi} \Pr[C_{(v_i), \pi}]
	=
	n(n-1)\cdots (n-2k+1) \cdot k! \cdot 2^{-k(k-1)}.
\end{equation*}
Using the straightforward inequalities $n(n-1)\cdots (n-2k+1)\le n^{2k}$ and $k! \leq k^k$, we see that
\[
	\Pr[C]
	\leq
	n^{2k} k^k 2^{-k(k-1)}
	=
	(n^2 k 2^{-k+1})^{k}.
\]
Setting $k = 2 \log_2 n + 2 \log_2 \log_2 n$, simple computations show that $\Pr[C]$ tends to $0$ as $n \to \infty$, and therefore the result follows from the preceding arguments.
\end{proof}

%
%
%

%\begin{small}
%\bibliographystyle{acm}
%\bibliography{lettericity}

\begin{thebibliography}{10}

\bibitem{albert:geometric-grid-:}
{\sc Albert, M., Atkinson, M., Bouvel, M., Ru{\v{s}}kuc, N., and Vatter, V.}
\newblock Geometric grid classes of permutations.
\newblock {\em Trans. Amer. Math. Soc. 365}, 11 (2013), 5859--5881.

\bibitem{albert:inflations-of-g:}
{\sc Albert, M., Ru{\v{s}}kuc, N., and Vatter, V.}
\newblock Inflations of geometric grid classes of permutations.
\newblock {\em Israel J. Math. 205}, 1 (2015), 73--108.

\bibitem{alecu:letter-graphs-a:iff}
{\sc Alecu, B., Ferguson, R., Kant{\'e}, M., Lozin, V., Vatter, V., and
  Zamaraev, V.}
\newblock Letter graphs and geometric grid classes of permutations.
\newblock {\em SIAM J. Discrete Math. 36}, 4 (2022), 2774--2797.

\bibitem{alecu:lettericity-of-:}
{\sc Alecu, B., Kant{\'e}, M., Lozin, V., and Zamaraev, V.}
\newblock Lettericity of graphs: an {FPT} algorithm and a bound on the size of
  obstructions.
\newblock arXiv:2402.12559 [math.CO].

\bibitem{alecu:understanding-l:}
{\sc Alecu, B., and Lozin, V.}
\newblock Understanding lettericity {I}: a structural hierarchy.
\newblock arXiv:2106.03267 [math.CO].

\bibitem{alecu:the-micro-world:}
{\sc Alecu, B., Lozin, V., and de~Werra, D.}
\newblock The micro-world of cographs.
\newblock In {\em Combinatorial Algorithms}, L.~G\k{a}sieniec, R.~Klasing, and
  T.~Radzik, Eds., vol.~12126 of {\em Lecture Notes in Comput. Sci.} Springer,
  Cham, Switzerland, 2020, pp.~30--42.

\bibitem{alecu:letter-graphs-a:}
{\sc Alecu, B., Lozin, V., de~Werra, D., and Zamaraev, V.}
\newblock Letter graphs and geometric grid classes of permutations:
  characterization and recognition.
\newblock {\em Discrete Appl. Math. 283\/} (2020), 482--494.

\bibitem{alecu:letter-graphs-a:abstract}
{\sc Alecu, B., Lozin, V., Zamaraev, V., and de~Werra, D.}
\newblock Letter graphs and geometric grid classes of permutations:
  characterization and recognition.
\newblock In {\em Combinatorial Algorithms}, L.~Brankovic, J.~Ryan, and W.~F.
  Smyth, Eds., vol.~10765 of {\em Lecture Notes in Comput. Sci.} Springer,
  Cham, Switzerland, 2018, pp.~195--205.

\bibitem{bevan:growth-rates-of:geom}
{\sc Bevan, D.}
\newblock Growth rates of geometric grid classes of permutations.
\newblock {\em Electron. J. Combin. 21}, 4 (2014), Paper 4.51, 17 pp.

\bibitem{elizalde:schur-positive-:}
{\sc Elizalde, S., and Roichman, Y.}
\newblock Schur-positive sets of permutations via products and grid classes.
\newblock {\em J. Algebraic Combin. 45}, 2 (2017), 363--405.

\bibitem{ferguson:on-the-letteric:}
{\sc Ferguson, R.}
\newblock On the lettericity of paths.
\newblock {\em Australas. J. Combin. 78}, 2 (2020), 348--351.

\bibitem{ferguson:letter-graphs-a:}
{\sc Ferguson, R., and Vatter, V.}
\newblock Letter graphs and modular decomposition.
\newblock {\em Discrete Appl. Math. 309\/} (2022), 215--220.

\bibitem{lesniak-foster:the-cochromatic:}
{\sc Lesniak-Foster, L.~M., and Straight, H.~J.}
\newblock The cochromatic number of a graph.
\newblock {\em Ars Combin. 3\/} (1977), 39--45.

\bibitem{mahadev:threshold-graph:}
{\sc Mahadev, N. V.~R., and Peled, U.~N.}
\newblock {\em Threshold Graphs and Related Topics}, vol.~56 of {\em Annals of
  Discrete Mathematics}.
\newblock North-Holland, Amsterdam, The Netherlands, 1995.

\bibitem{petkovsek:letter-graphs-a:}
{\sc Petkov{\v{s}}ek, M.}
\newblock Letter graphs and well-quasi-order by induced subgraphs.
\newblock {\em Discrete Math. 244}, 1-3 (2002), 375--388.

\end{thebibliography}
%\end{small}

\begin{small}

\end{small}

\end{document}